\pdfoutput=1

\RequirePackage{plautopatch}
\documentclass{article}



\usepackage{adjustbox}
\usepackage{amsmath,amssymb}
\usepackage{amsthm} 
\usepackage{ascmac}
\usepackage{autobreak}
\usepackage{bbm}
\usepackage{bm} 
\usepackage{braket}
\usepackage{cite}
\usepackage{dcolumn} 
\usepackage{enumitem}
\usepackage{footnote}
\usepackage{framed}
\usepackage{graphicx}
\usepackage{here}
\usepackage{hyperref}
\usepackage{ifthen}
\usepackage{longtable}
\usepackage{mathtools}
\usepackage{multirow}
\usepackage[square,numbers,sort&compress]{natbib}
\usepackage{pict2e}
\newboolean{arxiv}
\setboolean{arxiv}{true}
\ifthenelse{\boolean{arxiv}}{}{\usepackage{pxjahyper}}
\usepackage{pxrubrica}
\usepackage[usestackEOL]{stackengine}
\usepackage{txfonts} 
\usepackage{xparse}
\usepackage[only,llparenthesis,rrparenthesis,mapsfrom]{stmaryrd}

\usepackage{color}
\usepackage[most]{tcolorbox}
\usepackage{tikz}
\usetikzlibrary{arrows.meta}
\usetikzlibrary{calc}
\usetikzlibrary{cd}

\usepackage{settings_common}
\usepackage{settings}

\usepackage{settings_common}
\usepackage{settings}

\usepackage[top=25truemm,bottom=20truemm,left=40truemm,right=40truemm]{geometry}

\newcommand{\titlename}{Characterization of vector spaces by isomorphisms}

\newcommand{\hypercolor}{black}
\hypersetup{
  colorlinks,
  citecolor=\hypercolor,
  linkcolor=\hypercolor,
  urlcolor=\hypercolor,
  bookmarksopen,
  bookmarksopenlevel=4,
  bookmarksnumbered,
}

\begin{document}

\title{\titlename}
\author{Kenji Nakahira \thanks{\href{mailto:nakahira@lab.tamagawa.ac.jp}{nakahira@lab.tamagawa.ac.jp}}}
\date{}

\maketitle

\begin{abstract}
 A vector space is commonly defined as a set that satisfies several conditions
 related to addition and scalar multiplication.
 However, for beginners, it may be hard to immediately grasp the essence of these conditions.
 There are probably a fair number of people who have wondered if these conditions could be
 substituted with ones that seem more straightforward.
 This paper presents a simple characterization of a finite-dimensional vector space,
 using the concept of an isomorphism, aimed at readers with a fundamental understanding of linear algebra.
 An intuitive way to understand an $N$-dimensional vector space would be to perceive it
 as a set (equipped with addition and scalar multiplication) that is isomorphic to
 the set of all column vectors with $N$ components.
 The method proposed in this paper formalizes this intuitive understanding in a straightforward manner.
 This method is also readily extendable to infinite-dimensional vector spaces.
 While this perspective may seem trivial to those familiar with algebra,
 it may be useful for those who have just started learning linear algebra and are
 contemplating the above question.
 Moreover, this approach can be generalized to free semimodules over a semiring.
\end{abstract}

\setcounter{tocdepth}{1}
\tableofcontents

\section{Characterization of finite-dimensional vector spaces} \label{sec:finite}

In this section, after making some preparations, we will present a definition of
a finite-dimensional vector space, the equivalence of which to the standard definition
will be demonstrated in the next section.

Let us arbitrarily choose a field $\Field$.
We will refer to the elements of $\Field$ as scalars.
The reader not familiar with fields may think of $\Field$ as being the set of all real numbers $\Real$.

\begin{define}{Set equipped with addition and scalar multiplication}{Set}
 Consider a set $X$.
 When an element, $x + y$, of $X$, called the \termdef{addition of $x$ and $y$}, is determined
 for each $x, y \in X$
 [or equivalently, a map $X \times X \ni (x,y) \mapsto x + y \in X$ exists],
 $X$ is said to be \termdef{equipped with an addition}.
 Also, when an element, $a \cdot x$ (simply denoted by $ax$), of $X$,
 called the \termdef{$a$ times of $x$},
 is determined for each $a \in \Field$ and $x \in X$
 [or equivalently, a map $\Field \times X \ni (a,x) \mapsto ax \in X$ exists],
 $X$ is said to be \termdef{equipped with scalar multiplication}.
\end{define}
 
\begin{define}{Linear map}{Linear}
 Consider two sets, $X$ and $Y$, equipped with addition and scalar multiplication.
 A map $f \colon X \to Y$ is called \termdef{linear} (or \termdef{homomorphism})
 if the following two conditions:
 \begin{enumerate}[label=(\alph*)]
  \item Preservation of addition: $f(x + y) = f(x) + f(y)$ holds for any $x, y \in X$.
  \item Preservation of scalar multiplication: $f(ax) = a \cdot f(x)$ holds
        for any $a \in \Field$ and $x \in X$.
 \end{enumerate}
\end{define}
Note that in these equations, $x + y$ and $ax$ on the left-hand sides are, respectively,
the sum and scalar multiplication defined in $X$,
while $f(x) + f(y)$ and $a \cdot f(x)$ on the right-hand sides are those defined in $Y$.

\begin{define}{Isomorphism}{Isomorphism}
 A linear map is called an \termdef{isomorphism} if it is invertible%
 \footnote{For a map $f \colon X \to Y$, if there exists a map $g \colon Y \to X$ such that
 both the compositions $g \c f$ and $f \c g$ are identity maps, then $g$, denoted by $f^{-1}$,
 is called the inverse of $f$ and $f$ is said to be \termdef{invertible}.}.
 For two sets $X$ and $Y$ equipped with addition and scalar multiplication,
 if there exists an isomorphism from $X$ to $Y$, then $X$ and $Y$ are said to be \termdef{isomorphic},
 denoted as $X \cong Y$.
\end{define}
\begin{supplemental}
 While these definitions differ from the standard definitions in linear algebra,
 they are equivalent to the standard definitions when both $X$ and $Y$ are vector spaces.
\end{supplemental}

Let us consider the $N$-dimensional column vector space $\Field^N$ for a non-negative integer $N$,
which is defined by
\begin{alignat}{1}
 \Field^N \coloneqq
 \begin{dcases}
  \{ s \coloneqq [ s_1,s_2,\dots,s_N ]^\T \mid s_1,s_2,\dots,s_N \in \Field \}, & N > 0, \\
  [0], & N = 0, \\
 \end{dcases}
 \label{eq:FN}
\end{alignat}
where $^\T$ denotes transpose.
$\Field^N$ is equipped with the following defined addition and scalar multiplication:
\begin{itemize}
 \item Addition: $s + t \coloneqq [s_1 + t_1, s_2 + t_2, \dots, s_N + t_N]^\T$
       for each $s,t \in \Field^N$.
 \item Scholar multiplication: $as \coloneqq [as_1, as_2, \dots, as_N]^\T$
       for each $a \in \Field$ and $s \in \Field^N$.
\end{itemize}

In this paper, we define a finite-dimensional vector space as follows.
\begin{define}{$N$-dimensional vector space}{VectorSpaceFinite}
 A set, $\V$, equipped with addition and scalar multiplication is called
 an \termdef{$N$-dimensional vector space}
 if there exists a non-negative integer $N$ such that $\V \cong \Field^N$.
\end{define}

Intuitively speaking, an $N$-dimensional vector space is defined as a set that follows
the same operation rules for addition and scalar multiplication as $\Field^N$.
While this definition presupposes a specific set, $\Field^N$,
it does not explicitly state several conditions that are present in the standard definition.
This definition seems to be more straightforward in the sense that it is based on a linear map,
a concept often regarded as central in linear algebra.
\begin{supplemental}
 From one perspective, several conditions in the standard definition could be seen as
 explicitly detailing the requirements for having the same operational rules as $\Field^N$
 with respect to addition and scalar multiplication.
 The above definition could be interpreted as a more direct expression of having
 the same operational rules as $\Field^N$.
\end{supplemental}

\section{Equivalence with the standard definition}

In this section, we demonstrate our definition is equivalent to the standard definition
of a finite-dimensional vector space.
To distinguish these two difinitions, we will refer to a vector space defined in the
following standard way as the standard notion of a vector space.
\begin{define}{Standard notion of vector space}{StandardVectorSpace}
 A set, $\V$, equipped with addition and scalar multiplication is called the
 \termdef{standard notion of a vector space} if $\V$ contains an element $\zero$
 and satisfies the following conditions for each $a,b \in \Field$ and $x,y,z \in \V$:

 \vspace{.5em}
 \begin{tabular}[tb]{ll}
  (1)~$x + \zero = x$,
  & (2)~$x + (y + z) = (x + y) + z$, \\
  (3)~$x + y = y + x$,
  & (4)~$1 x = x$, \\
  (5)~$a(b x) = (ab)x$,
  & (6)~$a(x + y) = ax + ay$, \\
  (7)~$(a + b)x = ax + bx$,
  & (8)~$0 x = \zero = a \zero$. \\
 \end{tabular}
\end{define}
\begin{supplemental}[1]
 Here, we took into account Conditions (1)--(8) with the intention of later extending this definition.
 Instead of Condition~(8), the following condition is often assumed \cite{Shi-1977}:
 
 \vspace{.5em}
 \begin{tabular}[tb]{ll}
  \hspace{2em} (9)~$-x \in \V$ exists such that $x + (-x) = \zero$.
 \end{tabular}
 \vspace{.5em}

 \envindent
 Note that when Conditions (2)--(7) hold, $0x = \zero$ of Condition~(8) is equivalent to
 Conditions~(1) and (9)%
 \footnote{Proof: If $0x = \zero$ holds, then from $x + \zero = x + 0x = 1x + 0x = 1x = x$,
 Condition~(1) holds, and from $x + (-1)x = 1x + (-1)x = 0x = \zero$, Condition~(9) holds.
 Conversely, if Conditions (1) and (9) hold, then
 $0x = 0x + \zero = 0x + 0x + (-0x) = 0x + (-0x) = \zero$ holds.},
 and if these conditions hold, then $a \zero = \zero$ holds%
 \footnote{Proof: $a \zero = a \zero + \zero = a \zero + a \zero + (-a \zero)
 = a(\zero + \zero) + (-a \zero) = a \zero + (-a \zero) = \zero$.}.
 Therefore, Condition~(1) and $a \zero = \zero$ of Condition~(8) is redundant.
\end{supplemental}
\begin{supplemental}[2]
 Other sets of conditions can be considered.
 For example, we can omit Condition~(3) at the cost of replacing Condition (1) with
 $x + \zero = x = \zero + x$%
 \footnote{Indeed, $x + x + y + y = (1 + 1)x + (1 + 1)y = (1 + 1)(x + y)
 = x + y + x + y$ holds, and thus adding $(-1)x$ from the left and $(-1)y$ from the right
 to both sides yields Condition~(3).}.
 Thus, in the standard notion, we can say that there is a certain degree of flexibility
 in choosing the set of conditions.
\end{supplemental}

We call $\V$ the \termdef{standard notion of an $N$-dimensional vector space}
if there exists a set, $Y$, of $N$ elements, called a \termdef{basis} of $\V$, such that
each $x \in \V$ can be uniquely expressed in the form of
$x = \sum_{y \in Y} x_y \cdot y$ with $x_y \in \Field$.

In what follows, we prove the following proposition.
\begin{proposition}{}{main}
 A set equipped with addition and scalar multiplication is an $N$-dimensional vector space
 if and only if it is the standard notion of an $N$-dimensional vector space.
\end{proposition}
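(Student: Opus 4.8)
The plan is to prove the two implications separately, using throughout the fact that an isomorphism is a bijective linear map (invertibility is equivalent to bijectivity, since a two-sided inverse exists exactly when a map is both injective and surjective) and therefore faithfully transports the operations of addition and scalar multiplication between the two sets. As a preliminary I would check directly from the field axioms that $\Field^N$ itself, with its componentwise operations, satisfies Conditions~(1)--(8) and that the standard unit vectors $e_1, \dots, e_N$ form a basis in the required sense; every one of Conditions~(1)--(8) then reduces to the corresponding field identity applied coordinatewise. This turns the proposition into the assertion that both the validity of Conditions~(1)--(8) and the existence of an $N$-element basis are isomorphism-invariant.

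For the forward implication, suppose $f \colon \Field^N \to \V$ is an isomorphism and set $\zero \coloneqq f(\zero_{\Field^N})$. To obtain each of Conditions~(1)--(8) I would write arbitrary elements of $\V$ as images $f(s), f(t), f(u)$ and push the corresponding identity of $\Field^N$ through $f$; for example $x + (y + z) = f\bigl(s + (t + u)\bigr) = f\bigl((s + t) + u\bigr) = (x + y) + z$, and similarly $1 \cdot x = f(1 \cdot s) = f(s) = x$ and $0 \cdot x = f(0 \cdot s) = f(\zero_{\Field^N}) = \zero$. For the basis I would take $y_i \coloneqq f(e_i)$: surjectivity of $f$ gives, for $x = f(s)$ with $s = [s_1, \dots, s_N]^\T$, the representation $x = \sum_i s_i y_i$, while injectivity of $f$ yields both the distinctness of the $y_i$ and the uniqueness of the coefficients, so that $\V$ has exactly $N$ basis elements.

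For the converse, suppose $\V$ satisfies Conditions~(1)--(8) and has a basis $Y = \{y_1, \dots, y_N\}$ with unique representations. I would define the coordinate map $f \colon \Field^N \to \V$ by $f([s_1, \dots, s_N]^\T) \coloneqq \sum_i s_i y_i$. Surjectivity of $f$ is precisely the existence of representations and injectivity is their uniqueness, so $f$ is bijective, hence invertible; it remains to verify that $f$ is linear, after which it is an isomorphism and $\V \cong \Field^N$. Preservation of addition follows from $(s_i + t_i) y_i = s_i y_i + t_i y_i$ (Condition~(7)) together with Conditions~(2) and (3) to regroup the finite sum, and preservation of scalar multiplication follows from $(a s_i) y_i = a (s_i y_i)$ (Condition~(5)) together with Condition~(6) extended over the sum by induction.

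I expect the main difficulty to be organizational rather than conceptual: it lies in the careful regrouping and reindexing of the finite sums $\sum_i s_i y_i$, which is exactly where Conditions~(2),(3),(5),(6),(7) are consumed, and in confirming that the integer $N$ is the same in both notions---the basis constructed in the forward direction has precisely $N$ elements, and the coordinate map in the converse is built from an $N$-element basis. The degenerate case $N = 0$, in which $\Field^0 = [0]$ is a singleton and the basis is empty, is covered by the same arguments once empty sums are read as $\zero$.
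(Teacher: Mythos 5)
Your proof is correct and follows the same overall strategy as the paper's: transport the identities of $\Field^N$ through the isomorphism to get Conditions~(1)--(8) in one direction, and build the coordinate map from a basis in the other. Two differences are worth noting. First, you orient both isomorphisms as maps $\Field^N \to \V$, whereas Definition~\ref{def:VectorSpaceFinite} provides (and, in the converse direction, demands) an isomorphism $\psi \colon \V \to \Field^N$. Your argument therefore tacitly relies on the lemma that the inverse of an invertible linear map is again linear --- equivalently, that $\cong$ is symmetric --- a fact you assert in your preliminary paragraph (``faithfully transports the operations \ldots between the two sets'') but never prove. It is true and one line long: writing $x = f(s)$ and $y = f(t)$, one gets $f^{-1}(x + y) = f^{-1}\bigl(f(s) + f(t)\bigr) = f^{-1}\bigl(f(s + t)\bigr) = s + t = f^{-1}(x) + f^{-1}(y)$, and similarly for scalar multiplication; in a paper whose point is to build everything from minimal definitions, this step should be made explicit. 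The paper is structured precisely to avoid needing it: in the forward direction it keeps $\psi \colon \V \to \Field^N$, rewrites both sides of each identity (1')--(8') in the form $\psi(\cdots)$ using only the linearity of $\psi$, and then applies $\psi^{-1}$ merely as a set-theoretic inverse; in the converse direction it defines the coordinate-extraction map $\V \ni \sum_n a_n u_n \mapsto [a_1,\dots,a_N]^\T \in \Field^N$ directly, exhibiting your map $s \mapsto \sum_n s_n u_n$ as its inverse. Second, your proof is actually more complete than the paper's in one respect: in the direction ``$N$-dimensional vector space $\Rightarrow$ standard notion,'' you construct the basis $\{f(e_1),\dots,f(e_N)\}$ and verify existence, uniqueness, and distinctness via surjectivity and injectivity of $f$, whereas the paper's proof only derives Conditions~(1)--(8) and leaves the required $N$-element basis implicit.
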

\begin{proof}
 First, we show that if $\V$ is the standard notion of an $N$-dimensional vector space, then
 it is an $N$-dimensional vector space.
 Let us choose a basis $\{ u_n \}_{n=1}^N$ of $\V$; then,
 each element of $\V$ can be uniquely expressed in the form
 $\sum_{n=1}^N a_n u_n$ with $a_n \in \Field$.
 It is easily seen that the map
 $\V \ni \sum_{n=1}^N a_n u_n \mapsto [a_1,a_2,\dots,a_N]^\T \in \Field^N$
 is an isomorphism, whose inverse is
 $\Field^N \ni [a_1,a_2,\dots,a_N]^\T \mapsto \sum_{n=1}^N a_n u_n \in \V$.
 Therefore, $\V$ is an $N$-dimensional vector space.
 
 Next, we show that if $\V$ is an $N$-dimensional vector space, then it is the standard notion
 of an $N$-dimensional vector space.
 Let $\zero' \coloneqq [ 0, 0, \dots, 0]^\T \in \Field^N$.
 By the definition of a vector space, there exists an isomorphism $\psi \colon \V \to \Field^N$.
 For any $x,y,z \in \V$, $\psi(x), ~\psi(y), ~\psi(z) \in \Field^N$ satisfy the following conditions,
 which correspond to Conditions~(1)--(8), for any $a,b \in \Field$.

 \vspace{.5em}
 \begin{tabular}[tb]{ll}
  (1')~$\psi(x) + \zero' = \psi(x)$,
  & (2')~$\psi(x) + [\psi(y) + \psi(z)] = [\psi(x) + \psi(y)] + \psi(z)$, \\
  (3')~$\psi(x) + \psi(y) = \psi(y) + \psi(x)$,
  & (4')~$1 \cdot \psi(x) = \psi(x)$, \\
  (5')~$a[b \cdot \psi(x)] = (ab)\psi(x)$,
  & (6')~$a[\psi(x) + \psi(y)] = a \cdot \psi(x) + a \cdot \psi(y)$, \\
  (7')~$(a + b) \cdot \psi(x) = a \cdot \psi(x) + b \cdot \psi(x)$,
  & (8')~$0 \cdot \psi(x) = \zero' = a \zero'$. \\
 \end{tabular}
 \vspace{.5em}

 \noindent
 Expressing both sides of each of these equations in the form of $\psi(\cdots)$ and
 then applying $\psi^{-1}$ yields Conditions~(1)--(8).
 For example, the left-hand side of Condition~(1') is expressed by $\psi(x) + \psi(\zero) = \psi(x + \zero)$,
 where $\zero \coloneqq \psi^{-1}(\zero') \in \V$, so applying $\psi^{-1}$ gives $x + \zero$.
 Also, applying $\psi^{-1}$ to the right-hand side gives $x$, so we have $x + \zero = x$,
 i.e., Condition~(1).
 Similarly, the left-hand side of Condition~(2') is $\psi(x) + \psi(y + z) = \psi[x + (y + z)]$,
 and the right-hand side is $\psi(x + y) + \psi(z) = \psi[(x + y) + z]$, so applying $\psi^{-1}$
 to both sides yields $x + (y + z) = (x + y) + z$, i.e., Condition~(2).
 Conditions (3)--(8) can be obtained in the same way.
 Therefore, $\V$ is the standard notion of an $N$-dimensional vector space.
\end{proof}

As can be seen from this proof, if we define a vector space as in this paper, then
Conditions (1)--(8) of the standard definition are derived by the operation rules for addition
and scalar multiplication
in $\Field^N$, i.e., Conditions (1')--(8'), via the isomorphism $\psi$ and its inverse.

It is worth mentioning about our definition.
In general, an operation called addition is required to satisfy some basic conditions
such as Conditions~(1)--(3) of Definition~\ref{def:StandardVectorSpace}.
In contrast, in Definition~\ref{def:Set}, no conditions are imposed on the addition.
The same applies to scalar multiplication, and only the equipment of these two operations is required.
Proposition~\ref{pro:main} can be interpreted as claiming that just by requiring a set
equipped with addition and scalar multiplication to be isomorphic to $\Field^N$
as in Definition~\ref{def:VectorSpaceFinite}, Conditions (1)--(8) inherently hold.

\section{Extension to infinite-dimensional vector spaces and free semigroups over semirings}

In this section, we discuss how the ideas presented in the previous sections can be extended
to infinite-dimensional vector spaces and free semigroups over semirings.

\subsection{Extension to infinite-dimensional vector spaces}

It is easy to extend the above ideas to infinite-dimensional vector spaces.
\begin{define}{$\Field^J$}{FJ}
 For a set $J$, we denote by $\Field^J$ (also written as $\bigoplus_J \Field$)
 the set of maps $s \colon J \ni j \mapsto s_j \in \Field$
 such that the number of $j \in J$ satisfying $s_j \neq 0$ is finite,
 where $\Field^J$ is assumed to have addition and scalar multiplication defined as follows:
 \begin{itemize}
  \item Addition: for each $s,t \in \Field^J$, let $s + t$ be the map
        $J \ni j \mapsto s_j + t_j \in \Field$.
        Note that $s + t$ is in $\Field^J$.
  \item Scholar multiplication: for each $a \in \Field$ and $s \in \Field^J$,
        let $as$ be the map $J \ni j \mapsto a \cdot s_j \in \Field$.
        Note that $as$ is in $\Field^J$.
 \end{itemize}
\end{define}
\begin{define}{Vector space}{VectorSpace}
 A set, $\V$, equipped with addition and scalar multiplication is called
 a \termdef{vector space} if there exists a set $J$ such that $\V \cong \Field^J$.
\end{define}

\begin{ex}{}{}
 For a non-negative integer $N$, let $\tilde{N} \coloneqq \{ 1,2,\dots,N \}$; then,
 $\Field^N \cong \Field^{\tilde{N}}$ holds.
 Indeed, the map that maps each $[s_1,s_2,\dots,s_N]^\T \in \Field^N$ to
 the map $\tilde{N} \ni n \mapsto s_n \in \Field$ is an isomorphism from $\Field^N$
 to $\Field^{\tilde{N}}$.
 Thus, any $N$-dimensional vector space defined in Definition~\ref{def:VectorSpaceFinite}
 is a vector space.
\end{ex}

\begin{proposition}{}{main2}
 A set equipped with addition and scalar multiplication is a vector space
 if and only if it is the standard notion of a vector space.
\end{proposition}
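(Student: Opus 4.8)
The plan is to mirror the proof of Proposition~\ref{pro:main}, splitting into the two implications: the transfer-of-axioms argument carries over verbatim for the easy direction, while the hard direction now requires an explicit construction of a basis. I would treat the easy direction first and reserve the basis-existence argument for last, since that is where all the real content lies.

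For the direction ``vector space $\Rightarrow$ standard notion'', I would argue exactly as in the second half of the proof of Proposition~\ref{pro:main}. Given an isomorphism $\psi \colon \V \to \Field^J$, I first observe that $\Field^J$ itself satisfies the analogues (1')--(8') of Conditions (1)--(8): because addition and scalar multiplication on $\Field^J$ are defined pointwise (Definition~\ref{def:FJ}), each identity reduces to the corresponding field identity evaluated at each $j \in J$, with $\zero'$ the constant-zero map. I would then pull these identities back along $\psi$, writing each side in the form $\psi(\cdots)$ using linearity of $\psi$ and applying $\psi^{-1}$, with $\zero \coloneqq \psi^{-1}(\zero') \in \V$. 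Since this argument uses only that $\psi$ preserves addition and scalar multiplication and is invertible, it is wholly insensitive to whether $J$ is finite or infinite, so Conditions (1)--(8) hold for $\V$.

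For the converse ``standard notion $\Rightarrow$ vector space'', the goal is to produce a set $J$ and an isomorphism $\V \cong \Field^J$, and the natural choice is to take $J$ to be a basis of $\V$. First I would set up the notions of finite linear combination, linear independence, and spanning, deriving from Conditions (1)--(8) the facts that make finite sums unambiguous (associativity and commutativity from (2)--(3)) and that handle additive inverses (from $0x = \zero$ in (8) one obtains $x + (-1)x = \zero$). Then I would invoke Zorn's lemma on the family of linearly independent subsets of $\V$ ordered by inclusion: the union of a chain is again linearly independent because linear independence is a condition on finite subsets, so a maximal linearly independent set $Y$ exists. Maximality forces $Y$ to span $\V$, since otherwise adjoining any non-spanned element would enlarge $Y$ while keeping it independent. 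Hence every $x \in \V$ is uniquely a finite combination $x = \sum_{y \in Y} x_y \cdot y$ with $x_y \neq 0$ for only finitely many $y$, and the map $\V \ni x \mapsto (y \mapsto x_y) \in \Field^Y$ is a well-defined linear bijection whose inverse sends $s$ to $\sum_{y \in Y} s_y \cdot y$ (a finite sum by the finite-support condition in Definition~\ref{def:FJ}). Therefore $\V \cong \Field^Y$, and $\V$ is a vector space.

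The main obstacle is precisely the existence of a basis in the infinite-dimensional setting. In the finite case of Proposition~\ref{pro:main}, a basis was built into the definition of the standard notion of an $N$-dimensional vector space, so no such construction was needed; here, Conditions (1)--(8) alone must be shown to furnish a basis, and this genuinely relies on the axiom of choice through Zorn's lemma. Everything else --- verifying the transferred identities and checking that the coordinate map is linear and bijective --- is routine and essentially identical to the finite-dimensional argument.
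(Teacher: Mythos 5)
Your proposal is correct and follows essentially the same route as the paper's proof: one direction pulls Conditions (1)--(8) back along the isomorphism exactly as in Proposition~\ref{pro:main}, and the other chooses a basis $Y$ and checks that the coordinate map $\V \to \Field^Y$ is an isomorphism. The only difference is that you prove the existence of a basis via Zorn's lemma, whereas the paper simply cites this as a known consequence of the axiom of choice in a footnote; your added argument is sound, so this is a matter of self-containedness rather than of approach.
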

\begin{proof}
 If $\V$ is a vector space, it can be shown to be the standard notion of a vector space
 in the same way as the proof of Proposition~\ref{pro:main}.
 The converse can be shown as follows.
 Arbitrarily choose a basis $Y$ of $\V$%
 \footnote{A subset, $Y$, of $\V$ is called a \termdef{basis} of $\V$
 if each $x \in \V$ can be uniquely expressed in the form of
 $x = \sum_{y \in Y} x_y \cdot y$, where each $x_y$ is in $\Field$
 and the number of $y \in Y$ satisfying $x_y \neq 0$ is finite.
 Under the axiom of choice, it is known that every vector space has a basis.}.
 Each $x \in \V$ can be uniquely expressed in the form
 $x = \sum_{y \in Y} x_y \cdot y \in \V$ with $x_y \in \Field$, and we consider a map $\psi$
 from $\V$ to $\Field^Y$ that maps $x$ to the map $Y \ni y \mapsto x_y \in \Field$.
 It is immediately seen that $\psi$ is an isomorphism and its inverse $\psi^{-1}$ is
 $\Field^Y \ni s \mapsto \sum_{y \in Y} s_y \cdot y \in \V$.
 Therefore, since $\V \cong \Field^Y$ holds, $\V$ is a vector space.
\end{proof}

\subsection{Extension to free semigroups over semirings}

The above discussion can be extended in the same way to free semimodules over a semiring%
\footnote{We call $R$ a \termdef{semiring} if it contains two elements $0$ and $1$
(which may coincide) and the following conditions hold for each $a,b,c \in R$:

\vspace{.5em}
\begin{tabular}[tb]{ll}
 (1)~$a + 0 = a$,
 & (2)~$a + (b + c) = (a + b) + c$, \\
 (3)~$a + b = b + a$,
 & (4)~$1 a = a = a 1$, \\
 (5)~$a(b c) = (ab)c$,
 & (6)~$a(b + c) = ab + ac$, \\
 (7)~$(a + b)c = ac + bc$,
 & (8)~$0 a = 0 = a 0$. \\
\end{tabular}
\vspace{.5em}

Furthermore, $R$ is called a \termdef{ring} (with unity) if, for each $a \in R$, $-a \in R$
exists such that $a + (-a) = 0$.}%
$R$.
Considering the simplicity of the extension, we will limit our discussion to the main points.

Let us arbitrarily choose a semiring $R$ and refer to its elements as scalars.
By replacing $\Field$ with $R$ in Definitions~\ref{def:Set}, \ref{def:Linear}, and \ref{def:Isomorphism},
we can define sets equipped with addition and scalar multiplication, linear maps, and isomorphisms.

The standard notion of a free $R$-semimodule is defined as follows:
\begin{define}{Standard notion of free $R$-semimodule}{}
 Let $R$ be a semiring with elements referred to as scalars.
 A set, $\M$, equipped with addition and scalar multiplication is called the
 \termdef{standard notion of an $R$-semimodule} (or a left $R$-semimodule) if $\M$ contains
 an element $\zero$ and satisfies the following conditions for each $a,b \in R$
 and $x,y,z \in \M$:

 \vspace{.5em}
 \begin{tabular}[tb]{ll}
  (1)~$x + \zero = x$,
  & (2)~$x + (y + z) = (x + y) + z$, \\
  (3)~$x + y = y + x$,
  & (4)~$1 x = x$, \\
  (5)~$a(b x) = (ab)x$,
  & (6)~$a(x + y) = ax + ay$, \\
  (7)~$(a + b)x = ax + bx$,
  & (8)~$0 x = \zero = a \zero$. \\
 \end{tabular}
 \vspace{.5em}

 Moreover, $\M$ is called the \termdef{standard notion of a free $R$-semimodule}
 if there exists a subset, $Y$, of $\M$ such that each $x \in \M$ can be uniquely expressed
 in the form $x = \sum_{y \in Y} x_y \cdot y$, where each $x_y$ is in $R$
 and the number of $y \in Y$ satisfying $x_y \neq 0$ is finite.
\end{define}

Note that a free $R$-semimodule is equivalent to a free $R$-module if $R$ is a ring,
and to a vector space over $R$ if $R$ is a field.
\begin{ex}{}{}
 Let $N$ be a positive integer
 and $\Realp$ be the set of all non-negative real numbers, which is a semiring.
 Consider the set
 \begin{alignat}{1}
  \Realp^N \coloneqq \{ [ r_1,r_2,\dots,r_N ]^\T \mid r_1,r_2,\dots,r_N \in \Realp \},
 \end{alignat}
 which replaces $\Field$ in the set $\Field^N$ of Eq.~\eqref{eq:FN} with $\Realp$,
 where addition and scalar multiplication are defined as component-wise
 addition and scalar multiplication in the same manner as $\Field^N$.
 $\Realp^N$ is the standard notion of a free $\Realp$-semimodule.
 Note that since $\Realp$ is not a ring, $\Realp^N$ is not a module over a ring.
\end{ex}
\begin{ex}{}{}
 Let $N$ be a positive integer and
 $\Integer$ be the set of all integers, which is a ring.
 Consider the set
 \begin{alignat}{1}
  \Integer^N \coloneqq \{ [ m_1,m_2,\dots,m_N ]^\T \mid m_1,m_2,\dots,m_N \in \Integer \},
 \end{alignat}
 which replaces $\Field$ in the set $\Field^N$ of Eq.~\eqref{eq:FN} with $\Integer$,
 where addition and scalar multiplication are defined as component-wise
 addition and scalar multiplication in the same manner as $\Field^N$.
 $\Integer^N$ is the standard notion of a free $\Integer$-module.
 Note that since $\Integer$ is not a field, $\Integer^N$ is not a vector space.
\end{ex}

As a generalization of $\Field^J$ in Definition~\ref{def:FJ}, we can define $R^J$ as follows:
\begin{define}{$R^J$}{}
 Let $R$ be a semiring with elements referred to as scalars.
 For a set $J$, we denote by $R^J$ the set of maps $s \colon J \ni j \mapsto s_j \in R$
 such that the number of $j \in J$ satisfying $s_j \neq 0$ is finite,
 where $R^J$ is assumed to have addition and scalar multiplication defined as follows:
 \begin{itemize}
  \item Addition: for each $s,t \in R^J$, let $s + t$ be the map
        $J \ni j \mapsto s_j + t_j \in R$.
        Note that $s + t$ is in $R^J$.
  \item Scholar multiplication: for each $a \in R$ and $s \in R^J$,
        let $as$ be the map $J \ni j \mapsto a \cdot s_j \in R$.
        Note that $as$ is in $R^J$.
 \end{itemize}
\end{define}
A free $R$-semimodule can be defined as follows:
\begin{define}{Free $R$-semimodule}{Semimodule}
 Let $R$ be a semiring with elements referred to as scalars.
 A set, $\M$, equipped with addition and scalar multiplication is called
 a \termdef{free $R$-semimodule} if there exists a set $J$ such that $\V \cong R^J$.
\end{define}

This definition is equivalent to the standard definition.

\begin{proposition}{}{}
 Let $R$ be a semiring.
 A set equipped with addition and scalar multiplication is a free $R$-semimodule
 if and only if it is the standard notion of a free $R$-semimodule.
\end{proposition}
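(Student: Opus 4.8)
The plan is to follow the two earlier equivalence proofs almost verbatim, replacing the field $\Field$ by the semiring $R$ and $\Field^J$ by $R^J$, while isolating the single place where one must confirm that no field-specific feature (additive inverses, division, or commutativity of multiplication) was secretly used. The enabling observation I would record first is that $R^J$, with its componentwise operations and with $\zero$ taken to be the constant map $j \mapsto 0$, satisfies Conditions (1)--(8) of the standard notion of an $R$-semimodule. Each of these eight identities is checked index by index, and at every index it is exactly one of the eight semiring axioms listed in the footnote (associativity and commutativity of $+$, the unit law $1a=a$, associativity $a(bc)=(ab)c$, the two distributive laws, and $0a=0=a0$). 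I would emphasize that nothing beyond these axioms is invoked, so subtraction and division never appear.

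For the direction ``free $R$-semimodule $\Rightarrow$ standard notion of a free $R$-semimodule'' I would proceed exactly as in Proposition~\ref{pro:main}. Fixing an isomorphism $\psi \colon \M \to R^J$, I write each of Conditions (1)--(8) for $\psi(x),\psi(y),\psi(z) \in R^J$, express both sides in the form $\psi(\cdots)$ by linearity, and apply $\psi^{-1}$, obtaining Conditions (1)--(8) in $\M$ with $\zero \coloneqq \psi^{-1}(\text{zero of }R^J)$. What is genuinely new relative to Proposition~\ref{pro:main2} is that the standard notion of a \emph{free} semimodule also demands a basis, so I must exhibit one. For each $j \in J$ let $e_j \in R^J$ be the map sending $j$ to $1$ and every other index to $0$; using $a1=a$ and $a0=0$, every $s \in R^J$ equals $\sum_{j} s_j e_j$ (a finite sum, since $s$ has finite support), and evaluating at a fixed index shows this representation is unique, so $\{e_j\}_{j\in J}$ is a basis of $R^J$. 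Because $\psi^{-1}$ is linear and bijective, I would then transport it: applying $\psi^{-1}$ to $\psi(x)=\sum_j \psi(x)_j e_j$ gives the finite expansion $x=\sum_j \psi(x)_j\,\psi^{-1}(e_j)$, and uniqueness follows by applying $\psi$ and invoking uniqueness in $R^J$, so $Y \coloneqq \{\psi^{-1}(e_j)\}_{j\in J}$ is a basis of $\M$.

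For the converse I would copy the backward half of Proposition~\ref{pro:main2}. By hypothesis $\M$ carries a basis $Y$, so each $x \in \M$ is uniquely $x=\sum_{y \in Y} x_y \cdot y$ with finitely many nonzero $x_y \in R$. I define $\psi \colon \M \to R^Y$ by sending $x$ to the map $y \mapsto x_y$; uniqueness of the representation makes $\psi$ well defined, the componentwise operations make it linear, and $\psi^{-1}\colon s \mapsto \sum_{y \in Y} s_y \cdot y$ is its inverse. Hence $\M \cong R^Y$ and $\M$ is a free $R$-semimodule in the sense of Definition~\ref{def:Semimodule}.

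The only real obstacle is bookkeeping rather than mathematics: one must audit that each transported identity and each step of the basis verification appeals solely to the eight semiring axioms, never to additive or multiplicative inverses or to commutativity of multiplication. Once that audit is complete, the forward-direction basis transport---absent from Proposition~\ref{pro:main2}, where the notion of ``vector space'' carried no basis data---is the sole new ingredient, and it follows immediately from the linearity and bijectivity of $\psi^{-1}$.
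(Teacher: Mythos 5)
Your proof is correct, and in both directions it follows the transport-of-structure method that the paper itself intends when it says the proof ``follows the same method as the proof of Proposition~\ref{pro:main2}'' and omits it. However, your proposal supplies one ingredient that this one-line omission genuinely glosses over, and you are right to flag it as new: the standard notion of a \emph{free} $R$-semimodule carries basis data, whereas the standard notion of a vector space in Proposition~\ref{pro:main2} does not, so the forward direction must exhibit a basis of $\M$ and not merely verify Conditions (1)--(8). For vector spaces this gap is invisible, because every standard vector space has a basis (by the axiom of choice, as the paper's own footnote records); over a general semiring, by contrast, a semimodule satisfying Conditions (1)--(8) need not be free, so the basis really must be produced rather than cited. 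Your transport of the standard basis $\{e_j\}_{j \in J}$ of $R^J$ through $\psi^{-1}$, with uniqueness of the expansion pulled back via $\psi$ to uniqueness in $R^J$, does exactly this and relies only on the linearity and bijectivity of $\psi^{-1}$. The audit you describe---that every componentwise identity in $R^J$ invokes only the eight semiring axioms, never additive inverses, division, or commutativity of multiplication---is the other point the paper leaves implicit, and your index-by-index matching of Conditions (1')--(8') to semiring axioms (1)--(8) settles it. In short: same route as the paper, executed in full, with the one genuinely necessary extra step correctly identified and correctly carried out.
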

The proof follows the same method as the proof of Proposition~\ref{pro:main2} and thus is omitted.
\begin{supplemental}
 The proposed approach can be applied even in more general cases than free $R$-semimodules
 (for example, when $R$ is not necessarily a semiring),
 or when equipped with operations other than addition and scalar multiplication.
\end{supplemental}

\section*{Acknowledgment}

I am grateful to K. Kato for insightful discussions.

\end{document}